\newtheorem{thm}{Theorem}%
\newtheorem*{thm*}{Theorem}
\newtheorem{lem}{Lemma}
\newtheorem*{cor*}{Corollary}
\theoremstyle{definition}
\theoremstyle{remark}
\newtheorem{ex}{Example}
\theoremstyle{plain}
\def\NN{{\mathbb N}}
\def\RR{{\mathbb R}}
\def\TT{{\mathbb T}}
\def\ZZ{{\mathbb Z}}
\def\scrA{{\mathcal A}}
\def\scrB{{\mathcal B}}
\def\scrL{{\mathcal L}}
\def\scrM{{\mathcal M}}
\def\cl{\operatorname{cl}}
\def\dist{\operatorname{dist}}
\def\C{\operatorname{C{}}}
\def\GL{\operatorname{GL}}
\def\supp{\operatorname{supp}}
\def\vol{\operatorname{vol}}
\numberwithin{equation}{section}
\title{Pair correlation and equidistribution on manifolds}
\author{Jens Marklof}
\address{Jens Marklof, School of Mathematics, University of Bristol, Bristol BS8 1TW, U.K.\newline \rule[0ex]{0ex}{0ex} \hspace{8pt}{\tt j.marklof@bristol.ac.uk}}
\date{28 February 2019/3 April 2019}
\subjclass[2010]{11K06,11K45}
\begin{document}

\begin{abstract}
This study is motivated by a series of recent papers that show that, if a given deterministic sequence in the unit interval has a Poisson pair correlation function, then the sequence is uniformly distributed. Analogous results have been proved for point sequences on higher-dimensional tori. The purpose of this paper is to describe a simple statistical argument that explains this observation and furthermore permits a generalisation to  bounded Euclidean domains as well as compact Riemannian manifolds. 
\end{abstract}

\maketitle

\section{Introduction}

A sequence of real numbers $\xi_1,\xi_2,\xi_3,\ldots$ in the unit interval $[0,1]$ is called {\em uniformly distributed} if, for any subinterval $[a,b]\subset[0,1]$, we have
\begin{equation}\label{ud-intro}
\lim_{N\to\infty} \frac{\#\big\{ j\leq N \mid \xi_j\in[a,b]\big\} }{N} = b-a .
\end{equation}
That is, the proportion of elements that fall into a given subinterval is asymptotic to its length. A classic example is the Kronecker sequence $\xi_j=\langle j\alpha \rangle$ (where $\langle \,\cdot\,\rangle$ denotes the fractional part), which is uniformly distributed if and only if $\alpha$ is irrational. 
Once uniform distribution of a sequence is established, it is natural to investigate statistical properties on finer scales. One of the simplest such statistics is {\em pair correlation}. We say the sequence $(\xi_j)_{j\in\NN}$ in $[0,1]$ has a {\em Poisson pair correlation}, if for any bounded interval $[a,b]\subset\RR$ we have
\begin{equation}\label{pair-intro}
\lim_{N\to\infty} \frac{\#\big\{ (j_1,j_2)\in[1,N]^2 \mid \xi_{j_1}-\xi_{j_2} \in[\frac{a}{N},\frac{b}{N}],\; j_1\neq j_2\big\} }{N} = b-a .
\end{equation}
The average gap between the first $N$ elements $\xi_1,\ldots,\xi_N\in[0,1]$ is $\frac1N$, and so, by rescaling the interval to $[\frac{a}{N},\frac{b}{N}]$, we indeed measure correlations in units of the average gap size. The reference to {\em Poisson} stems from the fact that the right hand side of \eqref{pair-intro} corresponds to the pair correlation of a Poisson point process in $\RR$ of intensity one. What is more, the convergence \eqref{pair-intro} holds almost surely, if $(\xi_j)_{j\in\NN}$ is a sequence of independent, uniformly distributed random variables in $[0,1]$. 
Even for simple deterministic sequences, however, the convergence of pair correlation measures remains a significant challenge. 
For instance \eqref{pair-intro} is known to hold for $\xi_j=\langle j^k\alpha \rangle$ ($k\geq 2$ a fixed integer) for Lebesgue-almost every $\alpha$ \cite{Rudnick98}, and a lower bound on the Haussdorff dimension of permissible $\alpha$ has recently been established \cite{Aistleitner18b}. But so far there is not a single explicit example of $\alpha$, such as $\alpha=\sqrt 2$ or $\alpha=\pi$, for which \eqref{pair-intro} holds; not even in the quadratic case $k=2$ \cite{HeathBrown10,Marklof03,Marklof18}. There has been significant recent progess in characterising the Poisson pair correlation \eqref{pair-intro} for general sequences  $\xi_j=\langle a_j \alpha \rangle$, for Lebesgue-almost every $\alpha$, in terms of the additive energy of the integer coefficients $a_j$; cf.~\cite{Aistleitner18c,Bloom18} and references therein. Explicit examples for which Poisson pair correlation \eqref{pair-intro} can be established include the fractional part of square-roots, i.e., $\xi_j=\langle j^{1/2} \rangle$ \cite{ElBaz15}, and directions of points in a shifted Euclidean lattice \cite{ElBaz15b}. Note that \eqref{pair-intro} fails for the Kronecker sequence for any choice of $\alpha$ \cite{Larcher18,Marklof00}. Another interesting case of a uniformly distributed sequence is $\xi_j=\langle p_j \alpha \rangle$, where $p_j$ denotes the $j$th prime and $\alpha$ is irrational: also here \eqref{pair-intro} fails to hold, for almost every $\alpha$ \cite{Walker18}. This illustrates the perhaps unsurprising fact: uniform distribution does not imply Poisson pair correlation.

In two independent papers, Aistleitner, Lachmann and Pausinger \cite{Aistleitner18} and Grepstad and Larcher \cite{Grepstad17} reversed the question and asked whether Poisson pair correlation \eqref{pair-intro} of a given sequence implies uniform distribution. The answer is yes, even under weaker hypotheses than \eqref{pair-intro}, for sequences in the unit interval \cite{Aistleitner18,Grepstad17,Steinerberger18}. The same has been established for point sequences on higher-dimensional tori \cite{Hinrichs18,Steinerberger19}. In the present paper we develop a statistical argument that permits a generalisation of these findings to bounded domains in $\RR^d$ (Section \ref{sec1}) as well as compact Riemannian manifolds (Section \ref{sec2}; the special case of flat tori is discussed in the appendix). Instead of point sequences, we furthermore consider the more general setting of triangular arrays, i.e., sequences of finite point sets with increasing cardinality.

\section{Bounded domains}\label{sec1}

Let $\Omega\subset\RR^d$ be bounded with $\vol\partial\Omega=0$, where $\vol$ denotes the Lebesgue measure in $\RR^d$. (All subsets of $\RR^d$ in this paper are assumed to be Borel sets.)
Consider the triangular array $\xi=(\xi_{ij})_{ij}$ with coefficients $\xi_{ij}\in\Omega$ and indices $i,j\in\NN$, $j\leq N_i$, for some given $N_i\in\NN$ such that $N_i<N_{i+1}$. 

\begin{ex}\label{ex1}
Let $\Omega=[0,1]$. Take a real sequence $(\xi_j)_{j\in\NN}$ in $[0,1]$ (as in the introduction) and set $\xi_{ij}=\xi_j$ for $j\leq N_i=i\in\NN$. Sequences may thus be realised as special cases of a triangular arrays.
\end{ex}
\begin{ex}\label{ex2}
Let $\Omega=B_1^d$ be the open unit ball centered at the origin. Take a sequence $(a_j)_{j\in\NN}$ in $\RR^d$ such that $\|a_j\|\to\infty$, and set $\xi_{ij}=T_i^{-1} a_j$, with $N_i=\#\{ j\mid \|a_j\|<T_i\}$ and $T_1<T_2<\ldots\to\infty$ increasing sufficiently fast so that $N_{j+1}>N_j$.
\end{ex}

We associate with the $i$th row of $\xi$ the Borel probability measure $\nu_i$ on $\Omega$, defined by
\begin{equation}
\nu_i f = \frac{1}{N_i} \sum_{j=1}^{N_i} f(\xi_{ij}) ,
\end{equation}
where $f\in\C_b(\Omega)$ (bounded and continuous).
In other words, $\nu_i$ represents $N_i$ normalised point masses at the points $\xi_{i 1},\ldots,\xi_{i N_i}$.

Given a Borel probability measure $\sigma$ on $\Omega$, we say the triangular array $\xi$ is {\em equidistributed in $(\Omega,\sigma)$} if $\nu_i$ converges weakly to $\sigma$; that is,
\begin{equation}\label{dom-ud}
\lim_{i\to\infty} \nu_i f = \sigma f  \quad \text{for every $f\in\C_b(\Omega)$.}  
\end{equation}

In the case of Example \ref{ex1}, equidistribution in $([0,1),\vol)$ corresponds to the classical notion of uniform distribution discussed in the introduction. 

Let $A: \cl\Omega \to \GL(d,\RR)$ be a  continuous map. This means in particular that $\Delta(x)=|\det A(x)|$ is bounded above and below by positive constants. Define the finite Borel measure $\sigma$ on $\Omega$ by
\begin{equation}\label{sig-def}
\sigma(dx)= \Delta(x) \;dx .
\end{equation}
By multiplying $A$ with a suitable scalar constant, we may assume without loss of generality that $\sigma(\Omega)=1$. 

The role of $A$ in this paper is to set a local frame, at each point $x\in\Omega$, relative to which we measure correlations in the array $\xi$. This is particularly relevant in Section \ref{sec2}, where we extend the present discussion to manifolds. The simplest example of $A$ to keep in mind for now is the constant function $A(x)=\vol(\Omega)^{-1/d} I_d$ ($I_d$ is the identity matrix), so that $\sigma(dx)= \vol(\Omega)^{-1} \;dx$ is the uniform probability measure on $\Omega$. 

Given an increasing sequence $M=(M_i)_i$ in $\RR_{>0}$, the {\em pair correlation measure} $\rho_i$ of $\xi$ is defined by
\begin{equation}\label{pc4}
\rho_i f = \frac{M_i}{N_i^2} \sum_{\substack{j_1,j_2=1\\ j_1\neq j_2}}^{N_i} f(M_i^{1/d} A(\xi_{ij_1}) (\xi_{ij_1}-\xi_{ij_2})) ,
\end{equation}
where $f\in\C_c^+(\RR^d)$ (non-negative, continuous with compact support). The sequence $M$ determines the scale on which we measure correlations, and $A(\xi_{ij_1})$ provides a local rescaling of length units near each point $\xi_{ij_1}$, relative to the density of the measure $\sigma$. We call the pair $(A,M)$ a {\em scaling}.

If equidistribution \eqref{dom-ud} is known for some probability measure $\sigma$ with continuous density $\Delta$, then the most canonical choice for $A$ is $A(x)=\Delta^{1/d}(x) I_d$ and $M_i=N_i$, so that \eqref{pc4} captures correlations in units of the average Euclidean distance between the $\xi_{ij}$ near $x$, which is proportional to $(N_i\Delta(x))^{-1/d}$. The point of the present discussion is, however, that we do not assume equidistribution of the array $\xi$, and hence there is no a priori preferred choice of $A$ or $\sigma$.

Note that $\rho_i$ is a locally finite Borel measure on $\RR^d$. It is {\em not} a probability measure. We equip the space of locally finite Borel measures on $\RR^d$ with the vague topology, and say {\em $\xi$ has limiting pair correlation measure $\rho$ for the scaling $(A,M)$}, if $\rho_i$ converges vaguely to $\rho$. That is, if 
\begin{equation}\label{vc}
\lim_{i\to\infty} \rho_i f = \rho f \quad \text{for every $f\in\C_c^+(\RR^d)$.}
\end{equation}
We say $\rho_i$ has a {\em Poisson limit} for the scaling $(A,M)$ if \eqref{vc} holds with $\rho=\vol$. (The constant multiplier in this relation seems arbitrary, but is in fact determined by our scaling of $A$ such that $\sigma(\Omega)=1$.) In this case \eqref{vc} is equivalent to the statement 
\begin{equation}\label{vc222}
\lim_{i\to\infty} \rho_i D = \vol D \quad \text{for every bounded $D\subset\RR^d$ with $\vol\partial D=0$,}
\end{equation}where
\begin{equation}
\rho_i D = \frac{M_i}{N_i^2} \#\{ (j_1,j_2) \in \ZZ_{\neq}^2\cap[1,N_i]^2 \mid \xi_{ij_1}-\xi_{ij_2}\in M_i^{-1/d} A(\xi_{ij_1})^{-1}  D \},
\end{equation}
and $\ZZ_{\neq}^2=\ZZ^2\setminus\{(j,j)\mid j\in\ZZ\}$.

We furthermore say {\em $\rho_i$ has a sub-Poisson limit} if 
\begin{equation}\label{vc001}
\limsup_{i\to\infty} \rho_i f \leq \vol f \quad \text{for every $f\in\C_c^+(\RR^d)$,}
\end{equation}
which again is equivalent to the corresponding statement for bounded $D\subset\RR^d$ with $\vol\partial D=0$.


In many applications one considers only the pair correlation with respect to the distance between points. We consider here $\dist(x,y)= \| x-y\|$, with $\|\,\cdot\,\|$ the Euclidean norm in $\RR^d$.
The corresponding pair correlation is a locally finite Borel measure on $\RR_{\geq 0}$ defined by
\begin{equation}\label{pc-meas}
\widetilde\rho_i h = \frac{M_i}{N_i^2} \sum_{\substack{j_1,j_2=1\\ j_1\neq j_2}}^{N_i}  h(M_i^{1/d} \|A(\xi_{ij_1})(\xi_{ij_1}-\xi_{ij_2})\|) ,
\end{equation}
for $h\in\C_c^+(\RR_{\geq 0})$. In the spatial statistics literature variants of this are often referred to as Ripley's $K$-function; cf.\ \cite[Sect. 8.3]{Ripley81}.

Define the Borel measure $\omega$ on $\RR_{\geq 0}$ by
\begin{equation}\label{tilderho}
\omega [0,r] = r^d \vol B_1^ d,
\end{equation}
where $B_1^ d$ is the open unit ball. We say {\em $\widetilde\rho_i$ has a Poisson limit} if it converges vaguely to $\omega$.
Note that if $h\in\C_c^+(\RR_{\geq 0})$ then $f\in\C_c^+(\RR^d)$ for $f(x)=h(\|x\|)$. Therefore the vague convergence $\rho_i\to\rho$ implies the vague convergence $\widetilde\rho_i\to\widetilde\rho$ with $\widetilde\rho$ defined by the relation $\widetilde\rho h=\rho f$ with $f(x)=h(\|x\|)$. Thus if $\rho_i$ has a Poisson limit in the vague topology, then so does $\widetilde\rho_i$. 
We say {\em $\widetilde\rho_i$ has a sub-Poisson limit} if 
\begin{equation}\label{vc002}
\limsup_{i\to\infty} \widetilde\rho_i h \leq \omega h \quad \text{for every $h\in\C_c^+(\RR_{\geq 0})$.}
\end{equation}
The latter statement is equivalent to 
\begin{equation}\label{vc00202}
\limsup_{i\to\infty} \widetilde\rho_i [0,r] \leq r^d \vol B_1^ d \quad \text{for every $r>0$.}
\end{equation}

\begin{thm}\label{dom-thm:torus}
Fix $A$ and $\sigma$ as defined above, and let $\xi$ be a triangular array in $\Omega$. Then the following holds.
\begin{enumerate}[{\rm (i)}]
\item Suppose there is a sequence $M$ with $M_i\to\infty$ and $M_i\leq N_i$, such that $\widetilde\rho_i$ has a sub-Poisson limit for the scaling $(A,M)$. Then $\xi$ is equidistributed in $(\Omega,\sigma)$.
\item Suppose $\xi$ is equidistributed in $(\Omega,\sigma)$. Then there is a sequence $M$ with $M_i\to\infty$ and $M_i\leq N_i$, such that $\rho_i$ has a Poisson limit for the scaling $(A,M)$.
\end{enumerate}
\end{thm}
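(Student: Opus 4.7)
For part (i), my plan is a smoothing plus Cauchy--Schwarz trick. Fix a radial $\phi\in\C_c^+(\RR^d)$ with $\int\phi=1$ and introduce the $A$-adapted smoothing
\[
\Phi_i(x)=\sum_{j=1}^{N_i}\phi\!\bigl(M_i^{1/d}A(x)(x-\xi_{ij})\bigr).
\]
The change of variables $z=M_i^{1/d}A(x)(x-\xi_{ij})$, combined with the continuity of $A$ and $\Delta=|\det A|$ (so that $A(x)\to A(\xi_{ij})$ uniformly on the shrinking support), yields as $i\to\infty$
\[
\int_\Omega\!\Phi_i\,d\sigma=\tfrac{N_i}{M_i}(1+o(1)),\qquad
\int_\Omega\!\Phi_i^2\,d\sigma=\tfrac{N_i}{M_i}\!\int\!\phi^2+\tfrac{N_i^2}{M_i^2}\rho_i(\phi*\phi)+o\!\bigl(\tfrac{N_i^2}{M_i^2}\bigr),
\]
where the off-diagonal ($j_1\neq j_2$) part of the second integral is recognized, after change of variables, as $\rho_i$ evaluated on the radial function $\phi*\phi$. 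Since $\int(\phi*\phi)=1$, the sub-Poisson hypothesis on $\widetilde\rho_i$ (applied via the radial identification) gives $\limsup_i\rho_i(\phi*\phi)\le 1$, and Cauchy--Schwarz $(\int\!\Phi_i\,d\sigma)^2\le\int\!\Phi_i^2\,d\sigma$ (valid since $\sigma(\Omega)=1$) then delivers $\limsup_i\Var_\sigma(\tfrac{M_i}{N_i}\Phi_i)\le\int\phi^2$. Letting $\phi$ range over a family $(\phi_n)$ with $\int\phi_n=1$ and $\int\phi_n^2\to 0$ (e.g.\ smoothed normalized indicators of balls of growing radius) forces $\tfrac{M_i}{N_i}\Phi_i^{(n)}\to 1$ in $L^2(\sigma)$ along the iterated limit $i\to\infty$, $n\to\infty$. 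A parallel change of variables gives $(M_i/N_i)\int g\,\Phi_i^{(n)}\,d\sigma=\nu_i g+o_{i\to\infty}(1)$ for any $g\in\C(\cl\Omega)$, and combined with the $L^2$-convergence this yields $\nu_i g\to\sigma g$, so $\nu_i\to\sigma$ weakly.

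For part (ii), the substitution $z=M^{1/d}A(x)(x-y)$ combined with continuity of $\Delta$ shows, using $\sigma(\Omega)=1$, that
\[
M\!\int\!\!\int_{\Omega\times\Omega}\!f\bigl(M^{1/d}A(x)(x-y)\bigr)\,d\sigma(x)\,d\sigma(y)\;\longrightarrow\;\int f\,d\vol\qquad\text{as }M\to\infty
\]
for every $f\in\C_c^+(\RR^d)$. Writing $\rho_i f=M_i\!\int\!\!\int f(M_i^{1/d}A(x)(x-y))\,d\nu_i\!\otimes\!d\nu_i-M_if(0)/N_i$, and noting that for each fixed $M$ the map $(x,y)\mapsto f(M^{1/d}A(x)(x-y))$ is continuous and bounded on $\cl\Omega\times\cl\Omega$, equidistribution $\nu_i\to\sigma$ gives $\int\!\!\int f(M^{1/d}\cdots)\,d\nu_i^2\to\int\!\!\int f(M^{1/d}\cdots)\,d\sigma^2$ as $i\to\infty$ (with $M$ fixed). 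A standard diagonal enumeration over a countable dense family $\{f_k\}\subset\C_c^+(\RR^d)$ then produces a nondecreasing $M_i\to\infty$ with $M_i\le N_i$ and $M_i/N_i\to 0$ such that $\rho_i f_k\to\int f_k\,d\vol$ for every $k$; density extends this to all $f\in\C_c^+(\RR^d)$, giving the Poisson limit.

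The two main obstacles are the following. In (i), the hypothesis $M_i\le N_i$ does not entail $M_i/N_i\to 0$, so the residual $(M_i/N_i)\int\phi^2$ in the Cauchy--Schwarz estimate is not automatically small; this is why one must work with a family $(\phi_n)$ whose $L^2$-mass tends to zero, producing a genuine double limit. In (ii), the rate at which $\nu_i\to\sigma$ and the rate at which the Poisson $M$-limit is reached both depend on the test function $f$, and the diagonal enumeration over a countable dense family is what produces a single sequence $(M_i)$ working uniformly in $f$.
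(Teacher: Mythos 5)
Your strategy for part (i) is in essence the paper's own second-moment argument, rewritten in smoothed form: your $\phi*\phi$ is the autocorrelation function that the paper realises as $f(x)=\vol\big((D+x)\cap D\big)$ with $D=B_1^d$; your family $(\phi_n)$ with $\int\phi_n^2\to 0$ plays exactly the role of the paper's rescaling $M_i\mapsto\delta M_i$ in suppressing the diagonal contribution; and the final pairing against $g$ via Cauchy--Schwarz is the paper's step from a vanishing variance to equidistribution. Part (ii) coincides with the paper's proof, with the diagonal extraction over a countable dense family made explicit.

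The genuine gap in (i) is the boundary of $\Omega$. The claims $\int_\Omega\Phi_i\,d\sigma=\frac{N_i}{M_i}(1+o(1))$ and $\frac{M_i}{N_i}\int_\Omega g\,\Phi_i^{(n)}\,d\sigma=\nu_i g+o(1)$ for all $g\in\C(\cl\Omega)$ require that for every $j$ the full mass of the bump $x\mapsto\phi\big(M_i^{1/d}A(x)(x-\xi_{ij})\big)$ be captured by the integral over $\Omega$; this fails for points within $O(M_i^{-1/d})$ of $\partial\Omega$ (for $\Omega=[0,1]$ and $\xi_{ij}=0$ the bump contributes only $\tfrac12 M_i^{-1}$). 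Nothing in your argument rules out a positive proportion of the $\xi_{ij}$ lying that close to the boundary, and in that case the mean $\frac{M_i}{N_i}\int_\Omega\Phi_i\,d\sigma$ stays bounded away from $1$, so the $L^2$ convergence $\frac{M_i}{N_i}\Phi_i^{(n)}\to 1$ does not follow from the variance bound: the deficit enters with the unfavourable sign in
\begin{equation*}
\int_\Omega\Big(\tfrac{M_i}{N_i}\Phi_i-1\Big)^2 d\sigma=\tfrac{M_i^2}{N_i^2}\int_\Omega\Phi_i^2\,d\sigma-\tfrac{2M_i}{N_i}\int_\Omega\Phi_i\,d\sigma+1 .
\end{equation*}
The paper circumvents this by integrating over the enlarged set $\Omega_\epsilon=\Omega+B_\epsilon^d$ (so every bump is fully captured, at the cost of an error $\sigma(\Omega_\epsilon)-1$ removed by letting $\epsilon\to 0$ at the end), and by first proving equidistribution against test functions compactly supported in $\Omega^\circ$ before sandwiching up to $\C_b(\Omega)$ using constants. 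One could instead argue that the sub-Poisson hypothesis itself forbids a positive fraction of points in a region of vanishing volume (clustering $cN_i$ points into volume $v_i\to 0$ forces $\rho_i[0,r]\gg c^2 r^d/v_i\to\infty$ by Cauchy--Schwarz on cell counts), but either way this step needs an argument; as written it is asserted as a routine change of variables, which it is not.
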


It is well known that equidistribution does not imply a Poisson pair correlation at the scale $M_i=N_i$. (An elementary example is the triangular array in $[0,1]$ given by $\xi_{ij}=\frac{j}{N_i}$.) Furthermore, a Poisson pair correlation at this scale does not imply that other fine-scale statistics, such as the nearest-neighbour distribution, are Poisson \cite{Baddeley84,ElBaz15,ElBaz15b}. 

The proof of part (i) is split into four lemmas. 
For $x\in\RR^d$, define the counting measure $\hat\mu_i^x$ on $\RR^d$ by
\begin{equation}\label{mu-def}
\hat\mu_i^x f =  \sum_{j=1}^{N_i} f(M_i^{1/d} A(\xi_{ij})(\xi_{ij}-x)) ,
\end{equation}
where $f\in\C_c^+(\RR^d)$. Denote by $\chi_D$ the indicator function of a bounded subset $D\subset\RR^d$. Then
\begin{equation}
\hat\mu_i^x D 
=  \sum_{j=1}^{N_i} \chi_D(M_i^{1/d} A(\xi_{ij})(\xi_{ij}-x)) =   \#\{ j\leq N_i \mid  \xi_{ij} \in  x+M_i^{-1/d} A(\xi_{ij})^{-1}D  \} .
\end{equation}

For $\epsilon>0$, let $\Omega_\epsilon=\Omega+B_\epsilon^d$ be the $\epsilon$-neighbourhood of $\Omega$, where $B_\epsilon^d$ is the open ball of radius $\epsilon$ centered at the origin. The Tietze extension theorem allows us to extend $\Delta$ to a continuous function $\RR^d \to \RR_{>0}$. We also extend $\sigma$ to a locally finite measure outside $\Omega$ via relation \eqref{sig-def}.

It is convenient to work with the following normalised variant of $\hat\mu_i^x$,
\begin{equation}
\mu_i^x = \frac{M_i}{N_i} \hat\mu_i^x.
\end{equation}

\begin{lem}\label{lem0}
Fix a triangular array $\xi$, a sequence $M$ with $M_i\to\infty$ and $M_i\leq N_i$, and a bounded set $D\subset\RR^d$. Then, for $\epsilon>0$,
\begin{equation}\label{dom-eq:conw00}
\lim_{i\to\infty}  \int_{\Omega_\epsilon} \mu_i^x D\; \sigma(dx) 
 = \vol D .
\end{equation}
\end{lem}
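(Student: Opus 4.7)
The plan is to expand the integral by Fubini, perform a change of variables in $x$ for each summand that linearises $\hat\mu_i^x$, and then extract the leading term using uniform continuity of $\Delta$.

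First I would write
\begin{equation*}
\int_{\Omega_\epsilon} \mu_i^x D\; \sigma(dx) = \frac{M_i}{N_i} \sum_{j=1}^{N_i} \int_{\Omega_\epsilon} \chi_D\bigl(M_i^{1/d} A(\xi_{ij})(\xi_{ij}-x)\bigr)\, \Delta(x)\, dx .
\end{equation*}
For each $j$, the integrand is supported on $\xi_{ij} + M_i^{-1/d} A(\xi_{ij})^{-1}(-D)$, a set of diameter at most $M_i^{-1/d}\,\|A(\xi_{ij})^{-1}\|\,\diam D$. Since $A$ is continuous on the compact set $\cl\Omega$, the operator norms $\|A(\cdot)^{-1}\|$ are uniformly bounded, so for all $i$ large enough (uniformly in $j$), this support lies inside $\Omega_\epsilon$ and the restriction to $\Omega_\epsilon$ is harmless.

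Next I would substitute $y = M_i^{1/d} A(\xi_{ij})(\xi_{ij}-x)$, for which the Jacobian gives $dx = M_i^{-1}\Delta(\xi_{ij})^{-1}\, dy$. Thus for $i$ sufficiently large,
\begin{equation*}
\int_{\Omega_\epsilon} \chi_D\bigl(M_i^{1/d} A(\xi_{ij})(\xi_{ij}-x)\bigr)\, \Delta(x)\, dx = \frac{1}{M_i\,\Delta(\xi_{ij})}\int_D \Delta\bigl(\xi_{ij} - M_i^{-1/d} A(\xi_{ij})^{-1} y\bigr)\, dy.
\end{equation*}
Now $\Delta$ is continuous and bounded on the compact set $\cl\Omega_{\epsilon+1}$ (which contains all shifts in question for $i$ large), hence uniformly continuous there. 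Combined with the uniform bound on $\|A(\xi_{ij})^{-1}\|$, the argument $\xi_{ij}-M_i^{-1/d}A(\xi_{ij})^{-1}y$ lies within $O(M_i^{-1/d})$ of $\xi_{ij}$, uniformly over $y\in D$ and $j\leq N_i$. Consequently
\begin{equation*}
\int_D \Delta\bigl(\xi_{ij} - M_i^{-1/d} A(\xi_{ij})^{-1} y\bigr)\, dy = \Delta(\xi_{ij})\,\vol D + o(1),
\end{equation*}
where the error $o(1)$ is uniform in $j$.

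Putting these together, each summand equals $M_i^{-1}\bigl(\vol D + o(1)\bigr)$ (using that $\Delta^{-1}$ is bounded), and summing over $j=1,\dots,N_i$ and multiplying by $M_i/N_i$ gives $\vol D + o(1)$, which proves \eqref{dom-eq:conw00}. The main point to verify carefully is the uniformity (in $j$) of both the support-containment argument and the modulus-of-continuity estimate for $\Delta$; both rely solely on the continuity of $A$ on the compact set $\cl\Omega$, so there is no genuine obstacle.
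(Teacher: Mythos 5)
Your proposal is correct and follows essentially the same route as the paper: Fubini, the observation that the support of each summand is an $O(M_i^{-1/d})$-neighbourhood of $\xi_{ij}$ contained in $\Omega_\epsilon$ for large $i$ (the paper's containment \eqref{eighteen}), the linear change of variables with Jacobian $M_i^{-1}\Delta(\xi_{ij})^{-1}$, and uniform continuity of $\Delta$ to freeze the density at $\xi_{ij}$. The only difference is the order of the last two steps, which is immaterial.
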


\begin{proof}
Since $\Delta$ is uniformly continuous, we have
\begin{equation}\label{dom-eq:conw000}
\begin{split}
\int_{\Omega_\epsilon} \mu_i^x D\; \sigma(dx) 
& =  \frac{M_i}{N_i}  \sum_{j=1}^{N_i} \int_{\Omega_\epsilon} \chi_D(M_i^{1/d} A(\xi_{ij})(\xi_{ij}-x)) \, \Delta(x)\;dx \\
& =  \frac{M_i}{N_i}  \sum_{j=1}^{N_i} \int_{\Omega_\epsilon} \chi_D(M_i^{1/d} A(\xi_{ij})(\xi_{ij}-x)) \, (\Delta(\xi_{ij})+o(1))\;dx \\
& =  \frac{1}{N_i}  \sum_{j=1}^{N_i} \int_{M_i^{1/d}A(\xi_{ij}) (\xi_{ij}-\Omega_\epsilon)} \chi_D(x) \, (1+o(1))\;dx .
\end{split}
\end{equation}
For $M_i$ sufficiently large, we have 
\begin{equation}\label{eighteen}
D\subset M_i^{1/d}A(\xi_{ij}) B_\epsilon^d \subset  M_i^{1/d}A(\xi_{ij}) (\xi_{ij}-\Omega_\epsilon),
\end{equation}
since $\xi_{ij}+B_\epsilon^d\subset \Omega_\epsilon$.
This implies \eqref{dom-eq:conw00}.
\end{proof}

We denote by $\C_c(\Omega^\circ)$ the class of continuous functions $\Omega\to\RR$ with compact support in the interior $\Omega^\circ$ of $\Omega$.

\begin{lem}\label{dom-lem11}
Fix a triangular array $\xi$, a sequence $M$ with $M_i\to\infty$ and $M_i\leq N_i$, and a bounded set $D\subset\RR^d$ with $\vol D>0$. If for every Borel probability measure $\lambda$ on $\Omega$ with density in $\C_c(\Omega^\circ)$ we have
\begin{equation}\label{dom-eq:conw}
\lim_{i\to\infty} \int_{\Omega} \mu_i^x D\; \lambda(dx) = \vol D,
\end{equation}
then $\xi$ is equidistributed. 
\end{lem}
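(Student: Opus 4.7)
The strategy is to unfold the hypothesis via a change of variables, reducing it to weak convergence of $\nu_i$ to $\sigma$ against test functions in $\C_c(\Omega^\circ)$, and then extend to $\C_b(\Omega)$ by a boundary cutoff argument.

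Given a probability density $\phi\in\C_c(\Omega^\circ)$, set $\lambda=\phi\,dx$ and expand
\begin{equation*}
\int_\Omega \mu_i^x D\,\lambda(dx) = \frac{M_i}{N_i}\sum_{j=1}^{N_i}\int_\Omega\chi_D(M_i^{1/d}A(\xi_{ij})(\xi_{ij}-x))\,\phi(x)\,dx.
\end{equation*}
I would then apply the substitution $y=M_i^{1/d}A(\xi_{ij})(\xi_{ij}-x)$, whose Jacobian determinant has absolute value $M_i\Delta(\xi_{ij})$. Because $\supp\phi$ lies at positive distance from $\partial\Omega$, for $M_i$ sufficiently large the integrand is supported in $x\in\Omega^\circ$, so the $y$-range can be extended from $M_i^{1/d}A(\xi_{ij})(\xi_{ij}-\Omega)$ to all of $\RR^d$ without loss. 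Uniform continuity of $\phi$, the uniform bound on $\|A(\cdot)^{-1}\|$ over $\cl\Omega$, and the boundedness of $D$ then allow replacing $\phi(\xi_{ij}-M_i^{-1/d}A(\xi_{ij})^{-1}y)$ by $\phi(\xi_{ij})$ up to $o(1)$ uniform in $j$ and $y\in D$, yielding
\begin{equation*}
\int_\Omega \mu_i^x D\,\lambda(dx) = (\vol D)\,\nu_i(\phi/\Delta) + o(1).
\end{equation*}
The hypothesis \eqref{dom-eq:conw} together with $\vol D>0$ forces $\nu_i(\phi/\Delta)\to 1 = \sigma(\phi/\Delta)$ for every probability density $\phi\in\C_c(\Omega^\circ)$. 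Since $\Delta$ is positive and continuous, $\phi\mapsto\phi/\Delta$ is a linear bijection of $\C_c(\Omega^\circ)$, so by scaling and linearity $\nu_i g\to\sigma g$ for every $g\in\C_c(\Omega^\circ)$.

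To pass from $\C_c(\Omega^\circ)$ to $\C_b(\Omega)$, I would invoke that $\sigma$ is absolutely continuous and $\vol\partial\Omega=0$, hence $\sigma(\Omega^\circ)=1$. Choose cutoffs $\chi_\epsilon\in\C_c(\Omega^\circ)$ with $0\le\chi_\epsilon\le 1$ and $\sigma\chi_\epsilon\to 1$ as $\epsilon\to 0$ (for instance, a continuous cutoff equal to $1$ on $\{x:\dist(x,\partial\Omega)\ge 2\epsilon\}$ and supported in $\{x:\dist(x,\partial\Omega)\ge\epsilon\}$). For $f\in\C_b(\Omega)$, split $f=\chi_\epsilon f+(1-\chi_\epsilon)f$: the first term satisfies $\nu_i(\chi_\epsilon f)\to\sigma(\chi_\epsilon f)$ by the previous step, while the second is bounded by $\|f\|_\infty(1-\nu_i\chi_\epsilon)$, which can be made arbitrarily small by taking $\epsilon$ small and $i$ large. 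A standard three-term triangle inequality then gives $\nu_i f\to\sigma f$, i.e.\ equidistribution in $(\Omega,\sigma)$.

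The main technical subtlety is the change of variables when $\xi_{ij}$ lies close to $\partial\Omega$, but this disappears cleanly because $\phi$ is compactly supported inside $\Omega^\circ$: for $M_i$ large, the translated argument $\xi_{ij}-M_i^{-1/d}A(\xi_{ij})^{-1}y$ with $y\in D$ can contribute to the integral only when it lies in $\supp\phi\subset\Omega^\circ\subset\Omega$, so boundary effects vanish automatically. Beyond this, I do not foresee any serious obstacles.
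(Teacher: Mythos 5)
Your proof is correct and follows essentially the same route as the paper: both unfold the hypothesis to show that the kernel $x\mapsto\chi_D(M_i^{1/d}A(\xi_{ij})(\xi_{ij}-x))$ acts as an approximate identity of mass $(\vol D)/M_i$ with respect to $\sigma$ near $\xi_{ij}$, deduce $\nu_i g\to\sigma g$ for $g\in\C_c(\Omega^\circ)$ by linearity, and then upgrade to $\C_b(\Omega)$ using that $\nu_i$ and $\sigma$ are probability measures with $\sigma(\partial\Omega)=0$. Your explicit change of variables (versus the paper's in-place factoring of $f(y)(1+o(1))$) and your cutoff argument (versus the paper's sandwich between $\C_c(\Omega^\circ)$ functions and constants) are only cosmetic variations.
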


\begin{proof}
Let $f\in\C_c(\Omega^\circ)$ be the density of $\lambda$ with respect to $\sigma$. Then \eqref{dom-eq:conw} states explicitly that
\begin{equation}\label{dom-eq:conw2}
\lim_{i\to\infty} \frac{M_i}{N_i} \sum_{j=1}^{N_i} \int_{\Omega} \chi_D(M_i^{1/d} A(\xi_{ij}) (\xi_{ij}-x)) f(x) \, \sigma(dx)= \vol D \int_{\Omega}  f(x) \, \sigma(dx),
\end{equation}
which by linearity in fact holds for any $f\in\C_c(\Omega^\circ)$, not necessarily probability densities. 
Since $f$ and $A$ are uniformly continuous and $D$ is bounded, we have uniformly for $y\in\Omega$,
\begin{equation}
\int_{\Omega} \chi_D(M_i^{1/d}A(y)(y-x)) f(x) \, \sigma(dx) = f(y) (1+o(1)) \int_{\Omega} \chi_D(M_i^{1/d}A(y)(y-x))  \, \sigma(dx) ,
\end{equation}
and
\begin{equation}
\int_{\Omega} \chi_D(M_i^{1/d}A(y)(y-x))  \, \sigma(dx) = \vol D +o(1),
\end{equation}
uniformly for all $y\in\supp f$. (This follows from the same reasoning as in the proof of Lemma \ref{lem0}, since $\supp f$ avoids an $\epsilon$-neighbourhood of $\partial\Omega$, for some $\epsilon>0$.)
Therefore,
\begin{equation}
\frac{M_i}{N_i} \sum_{j=1}^{N_i} \int_{\Omega} \chi_D(M_i^{1/d} A(\xi_{ij})(\xi_{ij}-x)) f(x) \, \sigma(dx)
=  \frac{\vol D}{N_i} \sum_{j=1}^{N_i} f(\xi_{ij}) +o(1).
\end{equation}
Thus \eqref{dom-eq:conw2} implies for $f\in\C_c(\Omega^\circ)$
\begin{equation}\label{bbb}
\lim_{i\to\infty} \frac{1}{N_i} \sum_{j=1}^{N_i} f(\xi_{ij})  = \int_{\Omega}  f(x) \, \sigma(dx).
\end{equation}
This relation can be extended to $f\in\C_b(\Omega)$ by noting that \eqref{bbb} holds trivially for every constant test function: Any $f\in\C_b(\Omega)$ can be approximated from below by a function in $\C_c(\Omega^\circ)$, and from above by a function in $\C_c(\Omega^\circ)$ plus a constant.
This proves that $\xi$ is equidistributed.
\end{proof}

\begin{lem}\label{dom-lem12}
Fix a triangular array $\xi$ and a bounded set $D\subset\RR^d$. If there is a sequence $M$ with $M_i\to\infty$ and $M_i\leq N_i$ such that 
\begin{equation}\label{dom-eq:conw01}
\lim_{i\to\infty} \int_{\Omega} \big( \mu_i^x D - \vol D \big)^2\sigma(dx) = 0 ,
\end{equation}
then
\begin{equation}\label{dom-eq:conw02}
\lim_{i\to\infty} \int_{\Omega} \mu_i^x D \;\lambda(dx) = \vol D 
\end{equation}
for every Borel probability measure $\lambda$ with square-integrable density (with respect to $\sigma$).
\end{lem}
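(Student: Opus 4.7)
The approach is a one-line application of the Cauchy--Schwarz inequality, with the hypothesis tailored exactly for this purpose. Let $g = d\lambda/d\sigma$, which by assumption lies in $L^2(\Omega,\sigma)$. Since $\lambda$ is a probability measure, $\int_\Omega g(x)\,\sigma(dx)=1$, and therefore we may rewrite
\begin{equation*}
\int_\Omega \mu_i^x D \;\lambda(dx) - \vol D
= \int_\Omega \bigl(\mu_i^x D - \vol D\bigr)\, g(x)\, \sigma(dx).
\end{equation*}

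Applying Cauchy--Schwarz in $L^2(\Omega,\sigma)$ gives
\begin{equation*}
\left| \int_\Omega \mu_i^x D \;\lambda(dx) - \vol D \right|
\leq \left( \int_\Omega \bigl(\mu_i^x D - \vol D\bigr)^2 \sigma(dx) \right)^{1/2} \|g\|_{L^2(\sigma)}.
\end{equation*}
The first factor on the right tends to $0$ as $i\to\infty$ by the hypothesis \eqref{dom-eq:conw01}, while the second factor is a finite constant depending only on $\lambda$. This yields \eqref{dom-eq:conw02} and completes the proof.

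There is no substantive obstacle here: the square in \eqref{dom-eq:conw01} is precisely what Cauchy--Schwarz needs, and the fact that $\lambda$ is a probability measure is what allows us to centre the integrand by subtracting $\vol D$ without changing the value on the left-hand side. The only minor bookkeeping is noting that $\sigma$ is a finite measure on $\Omega$ (indeed a probability measure, by the normalisation convention fixed after \eqref{sig-def}), so that constant and $L^2$ test functions are integrable and the Cauchy--Schwarz step is justified without any further regularity assumption on $g$ beyond square-integrability.
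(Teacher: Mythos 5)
Your proof is correct and is essentially identical to the paper's own argument: write the difference as an integral against the density of $\lambda$ with respect to $\sigma$, then apply the Cauchy--Schwarz inequality so that the hypothesis \eqref{dom-eq:conw01} forces the bound to vanish. Nothing further is needed.
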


\begin{proof}
Let $f$ be the density of $\lambda$. By the Cauchy-Schwarz inequality,
\begin{equation}\label{dom-eq:conw0111}
\bigg|\int_{\Omega} \big( \mu_i^x D - \vol D \big) \lambda(dx) \bigg|
 \leq  \bigg(\int_{\Omega} f(x)^2\sigma(dx)\bigg)^{1/2} \bigg(\int_{\Omega} \big( \mu_i^x D - \vol D \big)^2\sigma(dx)\bigg)^{1/2}.
\end{equation}
This converges to zero as $i\to\infty$, which proves \eqref{dom-eq:conw02}.
\end{proof}

\begin{lem}\label{dom-lem13}
Fix a triangular array $\xi$, a sequence $M$ with $M_i\to\infty$ and $M_i\leq N_i$, and a bounded subset $D\subset\RR^d$ with $\vol\partial D=0$. 
Set 
\begin{equation}\label{dom-fDef}
f(x) = \vol\big( (D+x)\cap D \big) .
\end{equation}
Then $f\in\C_c^+(\RR^d)$ and we have, for $\epsilon>0$,
\begin{equation}\label{oh1}
\int_{\Omega_\epsilon} \big( \mu_i^x D - \vol D \big)^2 \sigma(dx) = \rho_i f -(\vol D)^2 (2-\sigma(\Omega_\epsilon))+ \frac{M_i}{N_i} \vol D +o(1).
\end{equation}
\end{lem}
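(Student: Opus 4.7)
The strategy is to expand the square $(\mu_i^x D - \vol D)^2 = (\mu_i^x D)^2 - 2\,\vol D\cdot\mu_i^x D + (\vol D)^2$ and integrate term by term over $\Omega_\epsilon$ against $\sigma$.

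First I would verify $f\in\C_c^+(\RR^d)$. Writing $f(x)=\int \chi_D(y)\chi_D(y-x)\,dy$ makes non-negativity and compact support (inside $B_{\diam D}^d$) immediate. Continuity follows from $\vol\partial D=0$ via dominated convergence, since $\chi_D$ is continuous at every point outside $\partial D$. For the cross term and constant contribution, Lemma \ref{lem0} gives $\int_{\Omega_\epsilon}\mu_i^x D\,\sigma(dx)=\vol D+o(1)$, so these two pieces combine to $-(\vol D)^2(2-\sigma(\Omega_\epsilon))+o(1)$.

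Next I would expand $(\mu_i^x D)^2=\frac{M_i^2}{N_i^2}\sum_{j_1,j_2}\chi_D^{(j_1)}(x)\chi_D^{(j_2)}(x)$ with $\chi_D^{(j)}(x):=\chi_D(M_i^{1/d}A(\xi_{ij})(\xi_{ij}-x))$, and split off the diagonal. Using $\chi_D^2=\chi_D$ and the change-of-variables computation already performed in Lemma \ref{lem0} (applied to a single summand), each diagonal integral contributes $(\vol D)/M_i+o(1/M_i)$, yielding a total diagonal of $\frac{M_i}{N_i}\vol D(1+o(1))=\frac{M_i}{N_i}\vol D+o(1)$, where the final bound uses the standing hypothesis $M_i/N_i\le 1$.

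For the off-diagonal I would perform the same change of variables $u=M_i^{1/d}A(\xi_{ij_1})(\xi_{ij_1}-x)$, so that $dx=M_i^{-1}\Delta(\xi_{ij_1})^{-1}du$, the first indicator becomes $\chi_D(u)$, and the second becomes $\chi_D(B_{12}(u-v_{12}))$ with $B_{12}:=A(\xi_{ij_2})A(\xi_{ij_1})^{-1}$ and $v_{12}:=M_i^{1/d}A(\xi_{ij_1})(\xi_{ij_1}-\xi_{ij_2})$. The integration domain $M_i^{1/d}A(\xi_{ij_1})(\xi_{ij_1}-\Omega_\epsilon)$ eventually contains $D$ uniformly in $j_1$ (as in \eqref{eighteen}), and uniform continuity of $\Delta$ lets us replace $\Delta(x)$ by $\Delta(\xi_{ij_1})$ at a cost of a multiplicative $1+o(1)$. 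The off-diagonal thus becomes
\begin{equation*}
\frac{M_i}{N_i^2}\sum_{\substack{j_1,j_2=1\\ j_1\neq j_2}}^{N_i}(1+o(1))\,\vol\bigl(D\cap(B_{12}^{-1}D+v_{12})\bigr).
\end{equation*}
To identify the sum with $\rho_i f=\frac{M_i}{N_i^2}\sum f(v_{12})=\frac{M_i}{N_i^2}\sum\vol(D\cap(D+v_{12}))$, I would observe that the only contributing pairs have $v_{12}$ in a fixed bounded set, forcing $\|\xi_{ij_1}-\xi_{ij_2}\|=O(M_i^{-1/d})$ and hence, by uniform continuity of $A$ on $\cl\Omega$, $\|B_{12}-I\|=o(1)$ uniformly. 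The hypothesis $\vol\partial D=0$ then gives $\vol(B_{12}^{-1}D\triangle D)=o(1)$ uniformly across contributing pairs, so the summand equals $f(v_{12})+o(1)$ pairwise. Combining all four pieces yields \eqref{oh1}.

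The main obstacle is the last step: showing that the pairwise errors accumulate only to $o(1)$ rather than $o(\rho_i f)$. The key is that these errors are concentrated on an $o(1)$-neighbourhood of $\partial D$ (uniformly in contributing pairs) and are zero elsewhere, so the aggregate error is controlled by the same quantity as the off-diagonal itself times a vanishing factor, allowing absorption into the $o(1)$ term.
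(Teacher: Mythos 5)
Your proposal is correct and follows essentially the same route as the paper: expand the square, use Lemma \ref{lem0} for the cross term and the constant, and identify the off-diagonal part of $\int (\mu_i^x D)^2\,\sigma(dx)$ with $\rho_i f$ via the change of variables $u=M_i^{1/d}A(\xi_{ij_1})(\xi_{ij_1}-x)$; your explicit handling of $B_{12}=A(\xi_{ij_2})A(\xi_{ij_1})^{-1}$ merely spells out what the paper compresses into ``by the same reasoning as in the proof of Lemma \ref{lem0}''. The ``main obstacle'' you flag is genuine but is equally present, and left implicit, in the paper's own proof: the accumulated per-pair errors amount to $o(1)$ times $\tfrac{M_i}{N_i^2}$ times the number of pairs $(j_1,j_2)$ with $M_i^{1/d}A(\xi_{ij_1})(\xi_{ij_1}-\xi_{ij_2})$ in a fixed bounded set, which is an absolute $o(1)$ only when this normalised pair count stays bounded --- as it does in the sole application of the lemma, where the sub-Poisson hypothesis bounds $\widetilde\rho_i[0,r]$.
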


\begin{proof}
By Lemma \ref{lem0}, 
\begin{equation}
\int_{\Omega_\epsilon} \mu_i^x D \,\sigma(dx) = \vol D +o(1),
\end{equation}
and so
\begin{equation}
\int_{\Omega_\epsilon} \big( \mu_i^x D - \vol D \big)^2 \sigma(dx) = \int_{\Omega_\epsilon} \big( \mu_i^x D \big)^2 \sigma(dx) - (\vol D)^2 (2-\sigma(\Omega_\epsilon)) + o(1).
\end{equation}
Furthermore, by the same reasoning as in the proof of Lemma \ref{lem0},
\begin{equation}\label{oh2}
\begin{split}
\int_{\Omega_\epsilon} \big( \mu_i^x D \big)^2 \sigma(dx) & =\frac{M_i^2}{N_i^2} \sum_{j_1,j_2=1}^{N_i}  \int_{\Omega_\epsilon} \chi_D(M_i^{1/d} A(\xi_{ij_1}) (\xi_{ij_1}-x)) \chi_D(M_i^{1/d}A(\xi_{ij_2})(\xi_{ij_2}-x))  \, \sigma(dx) \\
& =\frac{M_i}{N_i^2} \sum_{j_1,j_2=1}^{N_i}  \int_{\RR^d}  \chi_D(x)  \chi_D(x-M_i^{1/d}A(\xi_{ij_1})(\xi_{ij_1}-\xi_{ij_2})) \, dx +o(1).
\end{split}
\end{equation}
The summation over distinct indices $j_1\neq j_2$ yields $\rho_i f$ with $f$ as defined in \eqref{dom-fDef}. The summation over $j_1=j_2$ yields $\frac{M_i}{N_i} \vol D$.

The function $f$ is compactly supported, since $D$ is bounded. To prove continuity, note that for $\|x-y\|<\epsilon$, $|f(x)-f(y)|$ is bounded above by the volume of the $\epsilon$-neighbourhood of $\partial D$. Continuity of $f$ is therefore implied by the assumption $\vol\partial D=0$. 
\end{proof}

\begin{proof}[Proof of Theorem \ref{dom-thm:torus} (i)]
Assume that $\widetilde\rho_i$ has a sub-Poisson limit for some sequence $M$ with $M_i\leq N_i$. 
It follows from \eqref{vc00202} that, for any $\delta>0$, $\widetilde\rho_i$ also has a sub-Poisson limit for the scaling $(A,M^\delta)$ defined by $M_i^\delta = \delta M_i$. To highlight the $\delta$-dependence we write $\widetilde\rho_i=\widetilde\rho_i^\delta$.

Let $D=B_1^d$, and $f$ as defined in \eqref{dom-fDef}. $f(x) = \vol\big( (B_1^d+x)\cap B_1^d \big)$, and note that $f(x)=h(\| x\|)$ where
$h(r) = \vol\big( (B_1^d+r e_0 )\cap B_1^d \big)$
with $e_0$ an arbitrary choice of unit vector. The function $h$ is continuous and compactly supported on $\RR_{\geq 0}$, with $h(0)=\vol B_1^d$.  By assumption $\widetilde\rho_i^\delta$ has a sub-Poisson limit. Hence
\begin{equation}
\limsup_{i\to\infty} \widetilde\rho_i^\delta h \leq \omega h,
\end{equation}
and so
\begin{equation}
\limsup_{i\to\infty} \rho_i^\delta f \leq \int_{\RR^d} f(x) dx = (\vol D)^2.
\end{equation}
With this, Lemma \ref{dom-lem13} shows that, for any $\epsilon,\delta>0$,
\begin{equation}\label{oh123456}
\limsup_{i\to\infty}\int_{\Omega} \big( \mu_i^x D - \vol D \big)^2 \sigma(dx) 
\leq (\sigma(\Omega_\epsilon)-1) (\vol D)^2+ \delta \vol D .
\end{equation}
Since $\vol\partial\Omega=0$ and thus $\sigma(\partial\Omega)=0$, we have $\sigma(\Omega_\epsilon)\to\sigma(\Omega)=1$ as $\epsilon\to 0$. Thus there is a sequence of $\delta_i\to 0$, such that for the scaling $(A,M')$ given by ${M_i}'=\delta_i M_i$ we have
\begin{equation}\label{oh123456789}
\limsup_{i\to\infty}\int_{\Omega} \big( \mu_i^x D - \vol D \big)^2 \sigma(dx) = 0.
\end{equation}
This confirms the hypothesis of Lemma \ref{dom-lem12} for the sequence $M'$. Lemma \ref{dom-lem12} in turn establishes the assumption for Lemma \ref{dom-lem11}, which completes the proof of claim (i).
\end{proof}

\begin{proof}[Proof of Theorem \ref{dom-thm:torus} (ii)]
Since $\xi$ is equidistributed in $(\Omega,\sigma)$ we have, for $\psi\in\C_b(\Omega\times\Omega)$,
\begin{equation}
\lim_{i\to\infty} \frac{1}{N_i^2} \sum_{j_1,j_2=1}^{N_i} \psi(\xi_{ij_1},\xi_{ij_2}) = \int_{\Omega\times\Omega} \psi(x_1,x_2) \,\sigma(dx_1)\,\sigma(dx_2).
\end{equation}
Since $\psi$ is bounded, the above statement remains valid with the diagonal terms $j_1=j_2$ removed. For fixed $M_0>0$ and $f\in\C_c^+(\RR^d)$, apply this asymptotics with the choice $\psi(x_1,x_2)=M_0 f(M_0^{1/d} A(x_1)(x_1-x_2))$, which is bounded continuous. This yields,
\begin{equation}
\lim_{i\to\infty} \frac{M_0}{N_i^2} \sum_{\substack{j_1,j_2=1\\ j_1\neq j_2}}^{N_i} f(M_0^{1/d} A(\xi_{ij_1})(\xi_{ij_1}-\xi_{ij_2})) \\
= M_0 \int_{\Omega\times\Omega}  f(M_0^{1/d} A(x_1)(x_1-x_2)) \,\sigma(dx_1)\,\sigma(dx_2).
\end{equation}
The right hand side can be written as
\begin{multline}
M_0 \int_{\Omega\times\Omega}  f(M_0^{1/d} A(x_1)(x_1-x_2)) \, \Delta(x_1)\,\Delta(x_2) \,dx_1\,dx_2 \\
 = \int_\Omega \bigg(\int_{M_0^{1/d} (\Omega-x_2)}  f(A(M_0^{-1/d} x_1+x_2)x_1) \, \Delta(M_0^{-1/d} x_1+x_2)\,\Delta(x_2) \,dx_1\bigg) dx_2 .
\end{multline}
Since $f,A$ are continuous and $\Omega$ has boundary of Lebesgue measure zero, this expression converges, as $M_0\to\infty$, to 
\begin{equation}\label{lasteq}
\int_{\Omega}\int_{\RR^d}  f(A(x_2)x_1) \, \Delta(x_2)^2 \,dx_1\,dx_2 = \int_{\Omega}\int_{\RR^d}  f(x_1) \,dx_1\,\sigma(dx_2)=  
\vol f .
\end{equation}
This proves that there is a slowly growing sequence $M_i\to\infty$ such that 
\begin{equation}
\lim_{i\to\infty} \frac{M_i}{N_i^2} \sum_{\substack{j_1,j_2=1\\ j_1\neq j_2}}^{N_i} f(M_i^{1/d} A(\xi_{ij_1})(\xi_{ij_1}-\xi_{ij_2})) = \vol f,
\end{equation}
which proves part (ii) of the theorem.
\end{proof}

\section{Riemannian manifolds}\label{sec2}

Let $(\scrM,g)$ be a compact Riemannian manifold with metric $g$. We denote by $\vol_g$ the corresponding Riemannian volume, and normalise $g$ such that $\vol_g \scrM=1$. The geodesic distance between $x,y\in\scrM$ is denoted $\dist_g(x,y)$. Now consider a triangular array $\xi$ with coefficients in $\scrM$, and define the corresponding pair correlation measure by
\begin{equation}\label{pc-meas2}
\rho_i^{(g)} h = \frac{M_i}{N_i^2} \sum_{\substack{j_1,j_2=1\\ j_1\neq j_2}}^{N_i}  h(M_i^{1/d} \dist_g(\xi_{ij_1},\xi_{ij_2})) .
\end{equation}
In other words, for $r>0$,
\begin{equation}
\rho_i^{(g)} [0,r] 
= \frac{M_i}{N_i^2} \#\{ (j_1,j_2) \in \ZZ_{\neq}^2\cap[1,N_i]^2 \mid  \dist_g(\xi_{ij_1},\xi_{ij_2})\leq M_i^{-1/d} r  \}.
\end{equation}
We say {\em $\rho_i^{(g)}$ has a Poisson limit} for the scaling $M$ if it converges vaguely to $\omega$, with $\omega$ as defined in \eqref{tilderho} (with $\vol$ still the Lebesgue measure in $\RR^d$), and similarly say it has a {\em sub-Poisson limit} if for every $h\in\C_c^+(\RR_{\geq 0})$
\begin{equation}\label{vc003}
\limsup_{i\to\infty} \rho_i^{(g)} h \leq \omega h.
\end{equation}
which is equivalent to the statement
\begin{equation}\label{vc0020273627}
\limsup_{i\to\infty} \rho_i^{(g)} [0,r] \leq r^d \vol B_1^ d \quad \text{for every $r>0$.}
\end{equation}

The following is a corollary of Theorem \ref{dom-thm:torus}.

\begin{thm}
Let $(\scrM,g)$ be a compact Riemannian manifold, and $\xi$ a triangular array with coefficients in $\scrM$.
\begin{enumerate}[{\rm (i)}]
\item Suppose there is a sequence $M$ with $M_i\to\infty$ and $M_i\leq N_i$, such that $\rho_i^{(g)}$ has a sub-Poisson limit for the scaling $M$. Then $\xi$ is equidistributed in $(\scrM,\vol_g)$.
\item Suppose $\xi$ is equidistributed in $(\scrM,\vol_g)$. Then there is a sequence $M$ with $M_i\to\infty$ and $M_i\leq N_i$, such that $\rho_i^{(g)}$ has a Poisson limit for the scaling $M$.
\end{enumerate}
\end{thm}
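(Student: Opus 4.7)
The proof will mirror the four-lemma architecture used to prove Theorem~\ref{dom-thm:torus}(i)--(ii), with $\Omega$ replaced by $\scrM$, Euclidean balls and Lebesgue measure replaced by geodesic balls and $\vol_g$, and the rescaling map $A$ absorbed into the local identification by the exponential map. Since $(\scrM,g)$ is compact without boundary, the $\Omega_\epsilon$ and $\C_c(\Omega^\circ)$ technicalities of Lemmas~\ref{lem0}--\ref{dom-lem13} evaporate. The only new analytic input is a small-scale comparison between geodesic and Euclidean geometry: by compactness and the Gauss lemma / Jacobi-field expansion, uniformly in $x,y\in\scrM$ as $s\to 0^+$,
\begin{equation*}
\vol_g\big(B_g(x,s)\big)=s^d\vol B_1^d\,(1+O(s^2)),\qquad
\vol_g\big(B_g(x,s)\cap B_g(y,s)\big)=\widetilde f_s(\dist_g(x,y))+O(s^{d+2}),
\end{equation*}
where $\widetilde f_s(t)=\vol\big((B_s^d+te_0)\cap B_s^d\big)$ is the Euclidean ball-intersection volume.

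For part (i) I would set $\mu_i^x B_r=\frac{M_i}{N_i}\#\{j\leq N_i:\dist_g(\xi_{ij},x)\leq M_i^{-1/d}r\}$ and prove four analogs in turn: (a) $\int_\scrM\mu_i^x B_r\,\vol_g(dx)\to r^d\vol B_1^d$, directly from the small-ball volume asymptotic; (b) if $\int_\scrM\mu_i^x B_r\,\lambda(dx)\to r^d\vol B_1^d$ for every probability measure $\lambda$ with continuous density with respect to $\vol_g$, then $\xi$ is equidistributed (uniform continuity of the density on compact $\scrM$ takes the place of the $\Omega_\epsilon$-trick); (c) the Cauchy--Schwarz step of Lemma~\ref{dom-lem12} carries over unchanged; (d) the central second-moment identity
\begin{equation*}
\int_\scrM\big(\mu_i^x B_r-r^d\vol B_1^d\big)^2\vol_g(dx)=\rho_i^{(g)}\widetilde f_r-(r^d\vol B_1^d)^2+\frac{M_i}{N_i}r^d\vol B_1^d+o(1),
\end{equation*}
obtained by expanding the square and applying the two ball-volume estimates (the diagonal $j_1=j_2$ contributes the $\frac{M_i}{N_i}r^d\vol B_1^d$ term, while the off-diagonal sum produces $\rho_i^{(g)}\widetilde f_r$ via the scaling $\widetilde f_{M_i^{-1/d}r}(t)=M_i^{-1}\widetilde f_r(M_i^{1/d}t)$). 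Noting that $\omega\widetilde f_r=(r^d\vol B_1^d)^2$ by Fubini, the $\delta$-rescaling device from the proof of Theorem~\ref{dom-thm:torus}(i) forces the right-hand side to zero along a slow $M_i'=\delta_i M_i$, triggering $(d)\Rightarrow(c)\Rightarrow(b)\Rightarrow$ equidistribution.

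For part (ii), equidistribution gives, for every bounded continuous $\psi\colon\scrM\times\scrM\to\RR$,
\begin{equation*}
\frac{1}{N_i^2}\sum_{j_1,j_2=1}^{N_i}\psi(\xi_{ij_1},\xi_{ij_2})\longrightarrow\iint_{\scrM\times\scrM}\psi(x_1,x_2)\,\vol_g(dx_1)\,\vol_g(dx_2),
\end{equation*}
and the diagonal terms can be dropped by boundedness. Applying this with $\psi(x_1,x_2)=M_0 h(M_0^{1/d}\dist_g(x_1,x_2))$ for fixed $M_0$ and $h\in\C_c^+(\RR_{\geq 0})$, and then letting $M_0\to\infty$, the inner $x_2$-integral localises to a geodesic ball of radius $O(M_0^{-1/d})$ about $x_1$; passing to normal coordinates $v=M_0^{1/d}\exp_{x_1}^{-1}(x_2)$ and using the Jacobian expansion converts the double integral into $\omega h$ in the limit, uniformly in $x_1\in\scrM$. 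A diagonal argument over a countable dense subset of $\C_c^+(\RR_{\geq 0})$ then extracts a slowly growing sequence $M_i\to\infty$ along which $\rho_i^{(g)}$ has Poisson limit.

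The main obstacle is the accumulation of the per-pair error $O(s^{d+2})$ in step~(d). I would resolve it by observing that only pairs with $\dist_g(\xi_{ij_1},\xi_{ij_2})\leq 2M_i^{-1/d}r$ contribute, and that the sub-Poisson hypothesis caps the number of such pairs via $\rho_i^{(g)}[0,2r]=O(1)$ at $O(N_i^2/M_i)$; the total error is then $\frac{M_i^2}{N_i^2}\cdot O(N_i^2/M_i)\cdot O(M_i^{-(d+2)/d})=O(M_i^{-2/d})=o(1)$, as required.
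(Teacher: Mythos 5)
Your argument is correct, but for part (i) it takes a genuinely different route from the paper. The paper proves the manifold statement as a corollary of Theorem \ref{dom-thm:torus}: it fixes a finite atlas, partitions $\scrM$ into pieces $V_\beta$ with $\vol_g\partial V_\beta=0$, each contained in a single chart, maps them to a disjoint bounded union $\Omega\subset\RR^d$ with $\vol\partial\Omega=0$, transfers the array to an array $\xi'$ in $\Omega$, and encodes the metric through a continuous frame field $A:\cl\Omega\to\GL(d,\RR)$ obtained by Gram--Schmidt, so that $\sigma(dx)=|\det A(x)|\,dx$ corresponds to $\vol_g$. The only analytic input is then the first-order asymptotic $\dist_g(x,y)\sim\|A(\varphi_\alpha y)(\varphi_\alpha x-\varphi_\alpha y)\|$, together with the observation that cross-chart pairs are simply discarded --- harmless because the sub-Poisson inequality only needs to pass to a subset of the contributing pairs --- after which Theorem \ref{dom-thm:torus}(i) finishes the proof. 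You instead rerun the four-lemma scheme intrinsically with geodesic balls, which avoids all chart and partition bookkeeping (and, as you note, the $\Omega_\epsilon$ boundary issues, since $\vol_g\scrM=1$ exactly), but requires the quantitative second-order estimates for $\vol_g(B_g(x,s))$ and $\vol_g(B_g(x,s)\cap B_g(y,s))$ and control of the accumulated per-pair error $O(s^{d+2})$ in the second-moment identity. You correctly identify this as the main obstacle, and your resolution --- bounding the number of contributing pairs by $O(N_i^2/M_i)$ using the sub-Poisson hypothesis itself --- is valid, since $B_g(x,s)\cap B_g(y,s)=\emptyset$ whenever $\dist_g(x,y)>2s$ and the hypothesis gives $\limsup_i\rho_i^{(g)}[0,2r]\leq (2r)^d\vol B_1^d$; the bound survives the $\delta$-rescaling because $O((\delta M_i)^{-2/d})\to 0$ for each fixed $\delta$. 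For part (ii) the two arguments essentially coincide (the paper likewise reduces the $M_0\to\infty$ limit to a local-chart computation, and your diagonal extraction of a slowly growing $M_i$ over a countable dense family of test functions just makes explicit a step the paper leaves implicit). Net effect: the paper's reduction reuses the Euclidean theorem and needs almost no Riemannian geometry beyond continuity of the metric in charts; your intrinsic version is self-contained and arguably more natural on a manifold, at the price of the curvature-correction estimates.
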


Part (i) is closely related to, but not implied by, the results in \cite{Steinerberger17} for the choice $\rho_i^{(g)}h$ with $h(r)=\exp(-r^2)$.

\begin{proof}[Proof of (i)]
Consider an atlas $\{(U_\alpha,\varphi_\alpha)\mid \alpha\in \scrA\}$ with $\scrA$ finite. 
We take $\varphi_\alpha(U_\alpha)\subset\RR^d$ to lie in the same copy of $\RR^d$, arranged in such a way that the $\varphi_\alpha(U_\alpha)$ are pairwise disjoint. Now consider a partition of $\scrM$ by the bounded sets $V_\beta$ with $\beta\in\scrB$ and $\scrB$ finite, so that 
\begin{equation}
\bigcup_{\beta\in\scrB} V_\beta = \scrM, \qquad V_\beta\cap V_{\beta'}=\emptyset \; \text{if $\beta\neq\beta'$}, \qquad \vol_g\partial V_\beta=0.
\end{equation}
We assume the partition is sufficiently refined so that for $\beta\in\scrB$ there is a choice of $\alpha(\beta)\in \scrA$ such that $\cl V_\beta\subset U_{\alpha(\beta)}$. We set $\Omega_\beta=\varphi_{\alpha(\beta)} V_\beta$. The disjoint union
\begin{equation}
\Omega = \bigcup_{\beta\in\scrB} \Omega_\beta 
\end{equation}
is a bounded subset of $\RR^d$ with $\vol\partial\Omega=0$. Given a triangular array $\xi$ in $\scrM$ we define a corresponding array $\xi'$, whose $i$th row $(\xi_{ij}')_{j\leq N_i}$ is given by the elements in the set
\begin{equation}
\bigcup_{\beta\in\scrB} \varphi_{\alpha(\beta)}\big( \{ \xi_{ij} \mid j\leq N_i\}\cap V(\beta) \big) .
\end{equation}

By Gram-Schmidt orthonormalisation, there is a continuous function $A:\cl\Omega \to \GL(d,\RR)$ such that the metric $g$ is given at $x\in U_\alpha$ by the positive definite bilinear form 
\begin{equation}
g_x(X,Y) = \langle A(\varphi_{\alpha} x) X , A(\varphi_{\alpha} x) Y \rangle,
\end{equation}
where $\langle \,\cdot\, , \,\cdot\,  \rangle$ is the standard Euclidean inner product. With this choice, and the probability measure $\sigma$ defined as in \eqref{sig-def}, we see that the triangular array $\xi$ is equidistributed in $(\scrM,\vol_g)$ if and only if $\xi'$ is equidistributed in $(\Omega,\sigma)$.

Let us now compare the pair correlation measure $\widetilde\rho_i$ for $\xi'$ in $\Omega$ as defined in \eqref{pc-meas} with $\rho_i^{(g)}$.
For $h\in\C_c^+(\RR_{\geq 0})$, we have 
\begin{equation}
h(M_i^{1/d} \|A(\xi_{ij_1})(\xi_{ij_1}'-\xi_{ij_2}')\|) = 0
\end{equation}
if $\xi_{ij_1}'\in\Omega_\beta$, $\xi_{ij_2}'\in\Omega_{\beta'}$ with $\beta\neq\beta'$ and $M_i$ is sufficiently large. 
This means that the pairs $(j_1,j_2)$ contributing to $\widetilde\rho_i$ form a subset of those contributing to $\rho_i^{(g)}$.
Furthermore, we have
\begin{equation}
\dist_g(x,y) \sim \| A(\varphi_\alpha y) ( \varphi_\alpha x- \varphi_\alpha y) \|
\end{equation}
for $\|\varphi_\alpha x- \varphi_\alpha y\|\to 0$. 
Both facts taken together imply, by the uniform continuity of $h\in\C_c^+(\RR_{\geq 0})$, that
\begin{equation}
\limsup_{i\to\infty} \widetilde\rho_i h \leq \limsup_{i\to\infty} \rho_i^{(g)} h.
\end{equation}
This shows that if $\rho_i^{(g)}$ has a sub-Poisson limit then so does $\widetilde\rho_i$. Theorem \ref{dom-thm:torus} tells us that therefore $\xi'$ is equidistributed in $(\Omega,\sigma)$, and hence (as noted earlier) $\xi$ is equidistributed in $(\scrM,\vol_g)$. This yields claim (i). 
\end{proof}

\begin{proof}[Proof of (ii).]
Since $\xi$ is equidistributed in $(\scrM,\vol_g)$ we have, for $\psi\in\C(\scrM\times\scrM)$,
\begin{equation}
\lim_{i\to\infty} \frac{1}{N_i^2} \sum_{j_1,j_2=1}^{N_i} \psi(\xi_{ij_1},\xi_{ij_2}) = \int_{\scrM\times\scrM} \psi(x_1,x_2) \, 
\vol_g(dx_1)\,\vol_g(dx_2).
\end{equation}
Since $\psi$ is bounded, the above statement remains valid with the diagonal terms $j_1=j_2$ removed. For fixed $M_0>0$ and $h\in\C_c^+(\RR_{\geq 0})$, apply this asymptotics with the choice $\psi(x_1,x_2)=M_0 h(M_0^{1/d} \dist_g(x_1,x_2))$, which is bounded continuous. This yields,
\begin{equation}
\lim_{i\to\infty} \frac{M_0}{N_i^2} \sum_{\substack{j_1,j_2=1\\ j_1\neq j_2}}^{N_i} h(M_0^{1/d} \dist_g(\xi_{ij_1},\xi_{ij_2})) \\
= M_0 \int_{\scrM\times\scrM}  h(M_0^{1/d} \dist(x_1,x_2)) \,\vol_g(dx_1)\,\vol_g(dx_2).
\end{equation}
The limit $M_0\to 0$ can be calculated in local charts, which leads to the same calculation as in the proof of Theorem \ref{dom-thm:torus} (ii). 
\end{proof}

\begin{appendix}

\section{Flat tori}

It is instructive to adapt the discussion in Section \ref{sec1} to the case of a multidimensional torus $\TT$. This provides an alternative approach to the results in \cite{Steinerberger19}. We represent the torus as $\TT=\RR^d/\scrL$, with $\scrL\subset\RR^d$ a Euclidean lattice of unit covolume (for example the integer lattice $\scrL=\ZZ^d$). The required modifications are as follows. 

\begin{enumerate}[A.]
\item
Replace $\Omega$ by $\TT$ throughout Section \ref{sec1}, and note that $\C(\TT)=\C_b(\TT)=\C_c(\TT)$. 
\item
The coefficients of the triangular array are written as $\xi_{ij}+\scrL\in\TT$ with $\xi_{ij}\in\RR^d$. 
\item
Set for simplicity $A(x)=I_d$, so that $\sigma=\vol$ is the uniform probability measure on $\TT$. (It is of course possible to adapt the argument also for general continuous $A:\TT\to\GL(d,\RR)$.)
\item
The definition of the pair correlation measure $\rho_i$ in \eqref{pc4} is replaced by
\begin{equation}\label{pc4-TT}
\rho_i f = \frac{M_i}{N_i^2} \sum_{\substack{j_1,j_2=1\\ j_1\neq j_2}}^{N_i} \sum_{m\in\scrL} f(M_i^{1/d} (\xi_{ij_1}-\xi_{ij_2}+m)) ,
\end{equation}
and \eqref{pc-meas} by
\begin{equation}\label{pc-meas-TT22}
\widetilde\rho_i h = \frac{M_i}{N_i^2} \sum_{\substack{j_1,j_2=1\\ j_1\neq j_2}}^{N_i} \sum_{m\in\scrL}   h(M_i^{1/d} \|\xi_{ij_1}-\xi_{ij_2}+m\|) .
\end{equation}
(Note that the pair correlation measure \eqref{pc-meas2} for the Riemannian distance on $\TT$,
\begin{equation}
\dist_g(x,y)=\min_{m\in\scrL}\{ \| x-y+m \| \} ,
\end{equation}
satisfies the relation $\widetilde\rho_i h=\rho_i^{(g)} h$, for $h\in\C_c^+(\RR_{\geq 0})$ and $M_i$ sufficiently large.)
\item
The discussion around \eqref{dom-eq:conw00} is replaced by the following.
For $x\in\TT$, define the measure $\mu_i^x$ on $\RR^d$ by
\begin{equation}
\mu_i^x f = \frac{M_i}{N_i}  \sum_{j=1}^{N_i} \sum_{m\in\scrL} f(M_i^{1/d} (\xi_{ij}-x+m)) ,
\end{equation}
where $f\in\C_c^+(\RR^d)$. That is, for a bounded subset $D\subset\RR^d$, we have
\begin{equation}
\begin{split}
\mu_i^x D 
& = \frac{M_i}{N_i}  \sum_{j=1}^{N_i} \sum_{m\in\scrL} \chi_D(M_i^{1/d} (\xi_{ij}-x+m)) \\
& = \frac{M_i}{N_i}  \#\{ j\leq N_i \mid  \xi_{ij} \in  x+M_i^{-1/d} D +\scrL \} ;
\end{split}
\end{equation}
the second equality holds of $M_i$ is sufficiently large so that $M_i^{-1/d} D$ does not intersect any translate $M_i^{-1/d} D+m$ with $m\in\scrL\setminus\{0\}$.
Then
\begin{equation}\label{dom-eq:conw00-TT}
\int_\TT \mu_i^x D\; dx 
=  \frac{M_i}{N_i}  \sum_{j=1}^{N_i} \sum_{m\in\scrL} \int_{\TT} \chi_D(M_i^{1/d} (\xi_{ij}-x+m)) \, dx 
= \vol D .
\end{equation}
\item
In the statement of Lemma \ref{dom-lem13} no $\epsilon$ is needed, and \eqref{oh1} is replaced by the identity
\begin{equation}\label{oh1-TT}
\int_{\TT} \big( \mu_i^x D - \vol D \big)^2 dx = \rho_i f -(\vol D)^2 + \frac{M_i}{N_i} \vol D ,
\end{equation}
which follows from the following calculation, replacing \eqref{oh2},
\begin{equation}
\begin{split}
\int_{\TT} \big( \mu_i^x D \big)^2 dx & =\frac{M_i^2}{N_i^2} \sum_{j_1,j_2=1}^{N_i} \sum_{m_1,m_2\in\scrL} \int_{\TT} \chi_D(M_i^{1/d}(\xi_{ij_1}-x+m_1)) \chi_D(M_i^{1/d}(\xi_{ij_2}-x+m_2))  \, dx \\
& =\frac{M_i}{N_i^2} \sum_{j_1,j_2=1}^{N_i} \sum_{m\in\scrL} \int_{\RR^d}  \chi_D(x)  \chi_D(x-M_i^{1/d}(\xi_{ij_1}-\xi_{ij_2}+m)) \, dx .
\end{split}
\end{equation}
\item
For the proof of the second part of the theorem, we use instead
$$\psi(x_1,x_2)=M_0 \sum_{m\in\scrL} f(M_0^{1/d} (x_1-x_2+m)),$$ which is continuous on $\TT\times\TT$, with $f\in\C_c^+(\RR^d)$ as before. The assumed equidistribution implies
\begin{equation}
\lim_{i\to\infty} \frac{M_0}{N_i^2} \sum_{\substack{j_1,j_2=1\\ j_1\neq j_2}}^{N_i} \sum_{m\in\scrL}  f(M_0^{1/d} (\xi_{ij_1}-\xi_{ij_2}+m)) 
= M_0 \sum_{m\in\scrL}  \int_{\TT\times\TT}  f(M_0^{1/d} (x_1-x_2+m)) \,dx_1\,dx_2
\end{equation}
which evaluates to $\vol f$.
\end{enumerate}
\end{appendix}


\begin{thebibliography}{99}


\bibitem{Aistleitner18}
C. Aistleitner, T. Lachmann and F. Pausinger, Pair correlations and equidistribution, J. Number Th. 182 (2018), 206--220.

\bibitem{Aistleitner18c}
C. Aistleitner, T. Lachmann and N. Technau,
There is no Khintchine threshold for metric pair correlations, arXiv:1802.02659

\bibitem{Aistleitner18b}
C. Aistleitner, G. Larcher and M. Lewko, Additive Energy and the Hausdorff dimension of the exceptional set in metric pair correlation problems, Israel J. Math. 222 (2017), 463--485.

\bibitem{Baddeley84}
A.J. Baddeley and B.W. Silverman,
A cautionary example on the use of second-order methods for analyzing point patterns, Biometrics 40 (1984) 1089--1093.

\bibitem{Bloom18}
T.F. Bloom, S. Chow, A. Gafni and A. Walker, Additive energy and the metric Poissonian property, 
Mathematika 64 (2018), 679--700.


\bibitem{ElBaz15}
D. El-Baz, J. Marklof and I. Vinogradov, The two-point correlation function of the fractional parts of $\sqrt{n}$ is Poisson, Proc. AMS 143 (2015), 2815--2828.

\bibitem{ElBaz15b}
D. El-Baz, J. Marklof and I. Vinogradov, The distribution of directions in an affine lattice: two-point correlations and mixed moments, IMRN (2015), 1371--1400.

\bibitem{Grepstad17}
S. Grepstad and G. Larcher, On pair correlation and discrepancy, Arch. Math. 109
(2017), 143--149.

\bibitem{HeathBrown10}
D. R. Heath-Brown, Pair correlation for fractional parts of $ \alpha n^2 $, Math. Proc. Cambridge Philos. Soc. 148 (2010),
385--407.

\bibitem{Hinrichs18}
A. Hinrichs, L. Kaltenb\"ock, G. Larcher, W. Stockinger and M. Ullrich, On a multi-dimensional
Poissonian pair correlation concept and uniform distribution, arXiv:1809.05672

\bibitem{Larcher18}
G. Larcher and W. Stockinger, Some negative results related to Poissonian pair correlation problems, arXiv:1803.052361

\bibitem{Marklof00}
J. Marklof, The $n$-point correlations between values of a linear form, with an appendix by Z. Rudnick, Erg. Th. Dyn. Sys. 20 (2000), 1127--1172.

\bibitem{Marklof03}
J. Marklof and A. Str\"ombergsson, Equidistribution of Kronecker sequences along closed horocycles, Geom. Funct. Anal. 13 (2003), 1239--1280.

\bibitem{Marklof18}
J. Marklof and N. Yesha, Pair correlation for quadratic polynomials mod 1, Compositio Math. 154 (2018), 960--983.

\bibitem{Ripley81}
B.D. Ripley, {\em Spatial Statistics}, Wiley-Interscience, 1981. 

\bibitem{Rudnick98}
Z. Rudnick and P. Sarnak, The pair correlation function of fractional parts of polynomials, Comm. Math. Phys. 194 (1998), 61--70.

\bibitem{Steinerberger17}
S. Steinerberger, Localized quantitative criteria for equidistribution, Acta Arith. 180 (2017), 183--199. 

\bibitem{Steinerberger18}
S. Steinerberger, Poissonian pair correlation and discrepancy, Indagationes Math. 29 (2018), 1167--
1178 

\bibitem{Steinerberger19}
S. Steinerberger, Poissonian pair correlation in higher dimensions, arXiv:1812.10458 

\bibitem{Walker18}
A. Walker, The primes are not metric Poissonian, Mathematika 64 (2018), 230--236.


\end{thebibliography}
\end{document}